
\documentclass[envcountsame]{llncs}
\usepackage{amsmath}
\usepackage{amssymb}
\usepackage{enumitem}

\def\squareforqed{\hbox{\rlap{$\sqcap$}$\sqcup$}}
\def\qed{\ifmmode\squareforqed\else{\unskip\nobreak\hfil
\penalty50\hskip1em\null\nobreak\hfil\squareforqed
\parfillskip=0pt\finalhyphendemerits=0\endgraf}\fi}

\def\lsigma#1{{\Sigma}^0_{#1}}
\def\lpi#1{{\Pi}^0_{#1}}
\def\name#1{{\ulcorner{#1}\urcorner}}

\def\cal#1{{\mathcal #1}}

\def\IN{{\mathbb N}}
\def\IQ{{\mathbb Q}}

\def\baire{\IN^{\IN}}

\def\calP{{\cal P}}

\def\I#1{{{\mathrm{\mathbf I}}(#1)}}

\def\PL{{\mathrm{\mathbf A}}}
\def\PU{{\mathrm{\mathbf K}}}
\def\PO{{\mathrm{\mathbf O}}}

\def\fL{{f_L}}
\def\gL{{g_L}}
\def\fU{{f_U}}
\def\gU{{g_U}}

\def\calPfin{{\mathcal P}_{\mathrm{fin}}}

\def\int{{\mathrm{\mathbf{int}}}}

\def\wayabovearrow{\rlap{\raise-.25ex\hbox{$\shortuparrow$}}\raise.25ex\hbox{$\shortuparrow$}}
\def\waybelowarrow{\rlap{\raise.25ex\hbox{$\shortdownarrow$}}\raise-.25ex\hbox{$\shortdownarrow$}}

\begin{document}

\title{Some notes on spaces of ideals\\ and computable topology}
\author{Matthew de Brecht\thanks{This work was supported by JSPS Core-to-Core Program, A. Advanced Research Networks and by JSPS KAKENHI Grant Number 18K11166. The author thanks Tatsuji Kawai, Takayuki Kihara, Arno Pauly, Matthias Schr\"{o}der, Victor Selivanov, and Hideki Tsuiki for helpful discussions.}}
\institute{Graduate School of Human and Environmental Studies, Kyoto University, Japan\\
\email{matthew@i.h.kyoto-u.ac.jp}}

\maketitle


\section{Introduction}

It was shown in \cite{DPS} that the \emph{quasi-Polish spaces} introduced in \cite{dbr} can be equivalently characterized as spaces of ideals in the following sense. 

\begin{definition}[see \cite{DPS}]\label{def:idealspace}
Let $\prec$ be a transitive relation on $\IN$. A subset $I\subseteq \IN$ is an \emph{ideal} (with respect to $\prec$) if and only if:
\begin{enumerate}
\item
$I \not=\emptyset$,\hfill (\emph{$I$ is non-empty})
\item
$(\forall a \in I) (\forall b \in \IN)\, (b \prec a \Rightarrow b \in I)$,\hfill (\emph{$I$ is a lower set})
\item
$(\forall a,b \in I)(\exists c\in I)\, (a \prec c  \,\&\,  b \prec c)$.\hfill (\emph{$I$ is directed})
\end{enumerate}
The collection $\I{\prec}$ of all ideals has the topology generated by basic open sets of the form $[n]_{\prec} = \{ I \in \I{\prec} \mid n \in I\}$.
\qed
\end{definition}

We often apply the above definition to other countable sets with the implicit assumption that it has been suitably encoded as a subset of $\IN$. If $\prec$ is actually a partial order, then the definition of ideal above agrees with the usual definition of an ideal from order theory. Note that $\I{\prec} \subseteq \bigcup_{n\in\IN}[n]_{\prec}$ and if $I \in [a]_{\prec}\cap[b]_{\prec}$ then there is $c\in\IN$ with $I \in [c]_{\prec} \subseteq [a]_{\prec}\cap[b]_{\prec}$, so $\{ [n]_{\prec} \mid n\in\IN\}$ really is a basis for $\I{\prec}$ and not just a subbasis. Also note that proving the claim in the previous sentence requires all three of the axioms that define ideals.

We first give some basic examples. If $=$ is the equality relation on $\IN$, then $\I{=}$ is homeomorphic to $\IN$ with the discrete topology. If $\prec$ is the strict prefix relation on the set $\IN^{<\IN}$ of finite sequences of natural numbers, then $\I{\prec}$ is homeomorphic to the Baire space $\baire$. If $\subseteq$ is the usual subset relation on the set $\calPfin(\IN)$ of finite subsets of $\IN$, then $\I{\subseteq}$ is homeomorphic to $\calP(\IN)$, the powerset of the natural numbers with the Scott-topology. 

Spaces of the form $\I{\prec}$ for some transitive computably enumerable (c.e.) relation on $\IN$ provide an effective interpretation of quasi-Polish spaces. This effective interpretation as spaces of ideals was first investigated in \cite{DPS}, where they are called \emph{precomputable quasi-Polish spaces}, but they are equivalent to the \emph{computable quasi-Polish spaces} in \cite{KK17}, and they naturally correspond to c.e. propositional geometric theories via the duality in \cite{H15} (see \cite{Chen19} for extending this duality beyond propositional logic). In many applications it is useful to assume that $\I{\prec}$ also comes with a c.e. set $E_\prec = \{ n\in\IN \mid [n]_{\prec}\not=\emptyset \}$, which provides an effective interpretation of \emph{overt quasi-Polish spaces}. These are called \emph{computable quasi-Polish spaces} in \cite{DPS}, and are equivalent to the \emph{effective quasi-Polish spaces} in \cite{HRSS19}, and correspond to \emph{effectively enumerable computable quasi-Polish spaces} in the terminology of \cite{KK17}. Dually, they correspond to c.e. propositional geometric theories where satisfiability is semidecidable.

In this paper, we show some basic results on spaces of ideals, with an emphasis on the connections with computable topology. We also hope that our approach will help clarify the relationship between quasi-Polish spaces and domain theory (see \emph{abstract basis} in \cite{g03} or \cite{GL13}), and implicitly demonstrate how the theory of quasi-Polish spaces can be developed within relatively weak subsystems of second-order arithmetic (see the work on \emph{poset spaces} in \cite{mummert:phd}).


\section{Computable functions}

Computability of functions between spaces of ideals can be defined in a way that is compatible with the TTE framework \cite{W00}. We briefly review the TTE approach to computability on countably based $T_0$-spaces, but see \cite{Sch02} for the extension to the cartesian closed category of admissibly represented spaces and \cite{P16} for more general represented spaces.

Given a countably based $T_0$-space $X$ with fixed basis $(B_i)_{i\in\IN}$, the \emph{standard (admissible) representation} of $X$ is the partial function $\delta_X:\subseteq \baire\to X$ defined as $\delta_X(p) = x \iff range(p) = \{ i\in\IN \mid x \in B_i\}$. A function $f\colon X \to Y$ between spaces with standard admissible representations is computable if and only if there is a  computable (partial) function $F:\subseteq\baire\to\baire$ such that $f\circ \delta_X = \delta_Y\circ F$. It follows that a function $f\colon \I{\prec_1} \to \I{\prec_2}$ is computable if and only if there is an algorithm which transforms any enumeration of the elements of any $I\in \I{\prec_1}$ into an enumeration of the elements of $f(I)\in \I{\prec_2}$. 

We define a code for a partial function to be any subset $R\subseteq \IN\times\IN$. Each code $R$ encodes the partial function $\name{R}:\subseteq \I{\prec_1}\to \I{\prec_2}$ defined as
\begin{eqnarray*}
\name{R}(I) &=& \{ n\in\IN \mid (\exists m\in I)\,\langle m,n\rangle\in R\},\\
dom(\name{R}) &=& \{ I \in \I{\prec_1} \mid \name{R}(I) \in \I{\prec_2}\}.
\end{eqnarray*}

\begin{theorem}
Let $\prec_1$ and $\prec_2$ be transitive relations on $\IN$. A total function $f\colon \I{\prec_1} \to \I{\prec_2}$ is computable if and only if there is a c.e. code $R\subseteq \IN\times\IN$ such that $f=\name{R}$.
\end{theorem}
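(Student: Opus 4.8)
The plan is to prove the two implications separately; the direction ``c.e.\ code $\Rightarrow$ computable'' is a routine dovetailing, and the converse is the substantive one. So first suppose $R\subseteq\IN\times\IN$ is c.e.\ and $f=\name{R}$ is total. Given an enumeration of the elements of some $I\in\I{\prec_1}$, I would dovetail: enumerate $R$ and the elements of $I$ in parallel, and emit $n$ whenever a pair $\langle m,n\rangle\in R$ is found together with $m\in I$. The emitted numbers are precisely $\{\,n : (\exists m\in I)\,\langle m,n\rangle\in R\,\}=\name{R}(I)=f(I)$, and since $f$ is total this is a legitimate name of $f(I)\in\I{\prec_2}$; hence $f$ is computable.

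For the forward direction, fix an algorithm (oracle machine) $M$ transforming any enumeration of any $I\in\I{\prec_1}$ into an enumeration of $f(I)$, as recalled just before the theorem statement. The first step is to extract, in a c.e.\ way, the finite ``reasons'' behind $M$'s outputs: put $(D,n)$ — with $D\in\calPfin(\IN)$ coded as a natural number — into a set $V$ iff $M$ has a finite partial run that emits $n$ and whose oracle answers together form the set $D$. Then $V$ is c.e., and I would check that for every $I\in\I{\prec_1}$,
\[
n\in f(I)\quad\Longleftrightarrow\quad (\exists D\in\calPfin(\IN))\,\big(D\subseteq I\ \ \&\ \ (D,n)\in V\big).
\]
Left to right: running $M$ on any enumeration of $I$ eventually emits $n$, having by then received only finitely many oracle answers, all lying in $I$. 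Right to left: if $D\subseteq I$ and $(D,n)\in V$, realize $D$ as the set of oracle answers read at the start of a suitable enumeration of $I$ (possible because $I$ is nonempty, hence enumerable, and $D\subseteq I$); by determinism $M$ is then forced to emit $n$, and since $f$ is total this enumeration really does enumerate $f(I)$, so $n\in f(I)$.

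The second step replaces ``$D\subseteq I$'' by membership of a single element: by the lower-set and directedness axioms, for $D\in\calPfin(\IN)$ and $I\in\I{\prec_1}$ one has $D\subseteq I$ iff there is $m\in I$ with $d\prec_1 m$ for all $d\in D$ — ``if'' is immediate from the lower-set axiom, and ``only if'' follows by iterating directedness together with transitivity over the finitely many elements of $D$ (with nonemptiness of $I$ covering $D=\emptyset$). I would then set
\[
R=\{\,\langle m,n\rangle : (\exists D\in\calPfin(\IN))\,[\,(D,n)\in V\ \&\ (\forall d\in D)\,d\prec_1 m\,]\,\},
\]
which is c.e.\ using a fixed enumeration of $\prec_1$ alongside that of $V$; unwinding the definitions then gives $n\in\name{R}(I)\iff(\exists D\subseteq I)\,(D,n)\in V\iff n\in f(I)$, i.e.\ $f=\name{R}$.

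I expect the forward direction to be where the work lies, and within it two points are delicate. First, defining $V$ so that it is genuinely c.e.\ \emph{and} captures $f$ exactly: here totality of $f$ and the observation that every finite subset of $I$ arises from some finite portion of some name of $I$ are both used, and one must be careful that garbage behaviour of $M$ on non-names plays no role (it does not, because in the right-to-left verification the relevant $D$ is always genuinely a subset of $I$). Second, the passage from finite subsets of $I$ to a single dominating element: this is exactly where directedness is needed (for the ``only if'' of the characterization of $D\subseteq I$) and where the lower-set axiom is needed (for soundness of $R$, i.e.\ the ``if''), and it is also the point where an enumeration of $\prec_1$ enters in order to keep $R$ c.e.
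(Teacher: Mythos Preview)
Your argument is correct and follows the same underlying idea as the paper's proof, but you unpack what the paper black-boxes. The paper simply invokes a ``standard result'' that there is a computable family $(U_n)_{n\in\IN}$ of c.e.\ sets with $f^{-1}([n]_{\prec_2})=\bigcup_{m\in U_n}[m]_{\prec_1}$ and then sets $R=\{\langle m,n\rangle\mid m\in U_n\}$; your steps (extracting the finite oracle-use set $V$, then collapsing ``$D\subseteq I$'' to ``$(\exists m\in I)(\forall d\in D)\,d\prec_1 m$'' via directedness, the lower-set axiom, and transitivity) are precisely a proof of that standard result specialised to spaces of ideals. The trade-off is that the paper's version is shorter and cites folklore, while yours is self-contained and makes visible exactly where each ideal axiom is used.

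One point worth flagging: you explicitly use an enumeration of $\prec_1$ to make $R$ c.e., although the theorem as stated assumes only that $\prec_1$ is transitive. The paper's appeal to the ``standard result'' hides the same dependence (writing finite intersections $\bigcap_{d\in D}[d]_{\prec_1}$ as c.e.\ unions of basic opens needs effectivity of $\prec_1$), so this is not a defect of your proof relative to the paper's; but since you have made the assumption explicit, you might note that it is being used.
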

\begin{proof}
It is clear that if $f=\name{R}$ for some c.e. code $R$, then there is an algorithm which transforms any enumeration of the elements of any $I\in \I{\prec_1}$ into an enumeration of the elements of $f(I)\in \I{\prec_2}$. Therefore, $f$ is computable.

For the other direction, assume $f\colon \I{\prec_1} \to \I{\prec_2}$ is computable. It is a standard result that there is a computable enumeration $(U_n)_{n\in\IN}$ of c.e. subsets of $\IN$ such that $f^{-1}([n]_{\prec_2}) = \bigcup_{m \in U_n} [m]_{\prec_1}$. Let $R = \{\langle m,n\rangle \mid m \in U_n \}$. Given $I\in \I{\prec_1}$, if $n\in\name{R}(I)$, then there is some $m\in I$ with $\langle m,n\rangle\in R$, hence $m \in U_n$. Thus $I \in [m]_{\prec_1} \subseteq f^{-1}([n]_{\prec_2})$ which implies $n\in f(I)$. Conversely, if $n\in f(I)$ then $I \in f^{-1}([n]_{\prec_2})$, so there must be $m\in U_n$ with $I \in [m]_{\prec_1}$. It follows that $\langle m,n\rangle \in R$ and that $n \in \name{R}(I)$. Therefore, $R$ is a c.e. code satisfying $f = \name{R}$.
\qed
\end{proof}


\section{Basic constructions}


\subsection{Products}

Given relations $\prec_1$ and $\prec_2$ on $\IN$, define the relation $\prec^\times_{1,2}$ on $\IN$ as 
\[\langle a,b\rangle \prec^\times_{1,2} \langle a',b'\rangle \iff a \prec_1 a' \text{ \& } b \prec_2 b',\]
where $\langle \cdot, \cdot \rangle \colon \IN\times\IN\to\IN$ is a computable bijection. Then $\I{\prec^\times_{1,2}}$ is computably homeomorphic to the product $\I{\prec_1}\times \I{\prec_2}$ via the pairing function $\langle \cdot,\cdot\rangle \colon \I{\prec_1}\times \I{\prec_2}\to \I{\prec^\times_{1,2}}$
\[\langle I_1, I_2 \rangle = \{\langle a,b\rangle \mid a\in I_1 \text{ \& } b\in I_2\}\]
and the projections $\pi_i \colon \I{\prec^\times_{1,2}} \to \I{\prec_i}$ ($i \in\{1,2\}$)
\begin{eqnarray*}
\pi_1(I) &=& \{ a \in \IN \mid (\exists b\in\IN) \langle a,b\rangle\in I\},\\
\pi_2(I) &=& \{ b \in \IN \mid (\exists a\in\IN) \langle a,b\rangle\in I\}.
\end{eqnarray*}
We leave most of the proof to the reader as an exercise, but we will show that $\pi_1(I)$ really is a lower set because it is a nice example of how directedness and transitivity often compensate for the lack of reflexivity of the relations. Assume $I\in \I{\prec^\times_{1,2}}$ and $a \in \pi_1(I)$ and $a_0 \prec_1 a$. Then there is $b\in\IN$ with $\langle a,b\rangle \in I$. If $\prec_2$ was reflexive, then we would have $\langle a_0,b\rangle \prec^\times_{1,2} \langle a,b\rangle \in I$, and since $I$ is a lower set we would immediately conclude $\langle a_0,b\rangle\in I$. But without reflexivity, we must instead use the directedness of $I$ to first obtain $\langle a',b'\rangle \in I$ with  $\langle a,b\rangle \prec^\times_{1,2} \langle a',b'\rangle$, and then we have $\langle a_0,b\rangle \prec^\times_{1,2} \langle a',b'\rangle \in I$ by the transitivity of $\prec_1$ and $\prec_2$. We still get the desired conclusion $\langle a_0,b\rangle\in I$ (hence $a_0 \in \pi_1(I)$), albeit with a slight detour that required directedness and transitivity.

A simple modification of Definition~2.3.13 in \cite{mummert:phd} can be used to construct countable products from an enumeration $(\prec_i)_{i\in\IN}$ of transitive relations.


\subsection{Co-products}

We get co-products (i.e., disjoint unions) by defining the relation $\prec^+_{1,2}$ on $\IN$ as
\[\langle a,i\rangle \prec^+_{1,2} \langle a',j\rangle \iff  i=j\in\{1,2\} \text{ \& } a \prec_i a'.\]
Then it is an easy exercise to show that $\I{\prec^+_{1,2}}$ is computably homeomorphic to the co-product $\I{\prec_1}+ \I{\prec_2}$. It should be clear to the reader how to extend this to countable co-products.


\subsection{$\lpi 2$-subspaces and equalizers}

Let $\prec$ be a transitive relation on $\IN$. The $\lsigma 1$-subsets (or c.e. open subsets) of $\I{\prec}$ are encoded by c.e. subsets $U\subseteq \IN$ by defining
\[\name{U} = \bigcup_{n\in U}[n]_{\prec}.\]
The $\lpi 2$-subsets of $\I{\prec}$ are encoded by computable enumerations $(U_i, V_i)_{i\in\IN}$ of c.e. subsets of $\IN$ by defining
\[\name{(\forall i)U_i\Rightarrow V_i}= \{ I \in \I{\prec} \mid (\forall i\in\IN)[ I \in \name{U_i} \Rightarrow I \in \name{V_i}]\}.\]

The next theorem is part of the characterization of precomputable quasi-Polish spaces from \cite{DPS}. We provide a direct proof for convenience. 

\begin{theorem}[see \cite{DPS}]\label{thrm:equalizers}
Let $\prec$ be a transitive c.e. relation on $\IN$. Given a code of a $\lpi 2$-subset $A$ of $\I{\prec}$, one can computably obtain a transitive c.e. relation $\sqsubset$ on $\IN$ such that $\I{\sqsubset}$ is computably homeomorphic to $A$.
\end{theorem}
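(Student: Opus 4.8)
The plan is to represent ideals of $\I{\sqsubset}$ as pairs consisting of an ideal $I$ of $\prec$ together with a "certificate" that $I$ belongs to the $\lpi 2$-set $A = \name{(\forall i)U_i \Rightarrow V_i}$. The key idea is that membership in $A$ means that for every $i$ for which $I$ enters $\name{U_i}$, $I$ must also enter $\name{V_i}$; so a witness for $I \in A$ is a function assigning, to each such $i$, some $v \in I \cap V_i$. I would encode the underlying set of $\sqsubset$ as finite approximations of such data: a basic element is a pair $\langle n, s\rangle$ where $n \in \IN$ codes a basic open $[n]_\prec$ and $s$ is a finite partial function recording, for finitely many indices $i$, a chosen witness $v_i \in V_i$ together with (a bound on) the stage at which $n$ was seen to enter $\name{U_i}$. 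The relation $\langle n, s\rangle \sqsubset \langle n', s'\rangle$ should hold when $n \prec n'$, $s$ is "dominated by" $s'$ in the sense that every commitment in $s$ is honored and extended in $s'$, and $s'$ has made progress on resolving the next obligation — the precise formulation being chosen so that an ideal of $\sqsubset$ projects (via $\langle n, s\rangle \mapsto n$) onto an ideal $I$ of $\prec$, and the accumulated witnesses force $I \in \name{U_i} \Rightarrow I \in \name{V_i}$ for all $i$.

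Concretely, I would carry out the steps in this order. First, fix the code $(U_i, V_i)_{i\in\IN}$ of $A$ and fix computable enumerations of the $U_i$, $V_i$, and of $\prec$. Second, define the set of basic elements and the relation $\sqsubset$ explicitly, using a stage parameter to ensure c.e.-ness: $\langle n,s \rangle \sqsubset \langle n', s' \rangle$ iff (a) $n \prec n'$ in $\prec$, (b) the domain of $s'$ includes that of $s$ and for each $i \in dom(s)$ we have $s'(i) = s(i)$ with $s(i) \in V_i$ already enumerated and $n' \prec^* $-witnessed to lie in $\name{U_i}$, (c) $s'$ handles "the least unhandled obligation" — i.e., if the stage is large enough to see $n'$ enter $\name{U_i}$ for some $i \notin dom(s')$, then actually $i \in dom(s')$ — so that passing to an ideal closes off every obligation. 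Third, verify transitivity of $\sqsubset$ (routine from transitivity of $\prec$ and the monotonicity built into the $s$-part) and c.e.-ness (all conditions are positive and stage-bounded). Fourth, define the candidate homeomorphism: $\Phi\colon A \to \I{\sqsubset}$ sends $I$ to the set of all $\langle n, s\rangle$ such that $n \in I$, every value $s(i)$ lies in $I$, and the bookkeeping in $s$ is consistent with $I$; conversely $\Psi\colon \I{\sqsubset} \to A$ sends $J$ to $\{ n \mid (\exists s)\, \langle n,s\rangle \in J\}$. Fifth, check that $\Phi(I)$ really is an ideal of $\sqsubset$ — this uses directedness of $I$ to amalgamate two approximations $\langle n_1, s_1\rangle, \langle n_2, s_2 \rangle$ into a common upper bound $\langle n_3, s_3\rangle \in \Phi(I)$, exactly in the spirit of the $\pi_1$ computation shown earlier in the paper — and that $\Psi(J)$ is an ideal of $\prec$ lying in $A$, where the "lying in $A$" part is precisely where condition (c) pays off. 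Sixth, check $\Phi$ and $\Psi$ are mutually inverse and that both directions are computable (they are evidently given by c.e. codes in the sense of the preceding theorem), hence computable homeomorphisms.

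The main obstacle I expect is getting the definition of $\sqsubset$ exactly right so that all three ideal axioms transfer correctly in both directions simultaneously. The tension is: the $s$-component must carry enough information that an ideal's worth of approximations pins down witnesses for all infinitely many obligations $i$ (forcing $\Psi(J) \in A$), yet the relation must remain loose enough that directedness still holds — given any two approximations with possibly conflicting partial witness functions $s_1, s_2$, we need, whenever $n_1, n_2 \in I \in A$, to find a genuine common $\sqsubset$-upper bound inside $\Phi(I)$, which requires that the witnesses recorded in $s_1$ and $s_2$ be simultaneously realizable in $I$; this is why the values $s(i)$ must be required to actually lie in $I$, and why $\sqsubset$ must only compare approximations whose witness-parts agree on their common domain rather than allowing arbitrary revision. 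Balancing these — likely by insisting that along a $\sqsubset$-chain the witness function only ever grows and never changes a committed value, while obligations are discharged in a fixed priority order — is the delicate part; once the definition is pinned down, the verifications are the kind of directedness-plus-transitivity bookkeeping already illustrated for products in this section.
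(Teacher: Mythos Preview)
There is a concrete gap exactly where you anticipate it. You stipulate in (b) that witnesses are frozen along $\sqsubset$-chains (``for each $i\in dom(s)$ we have $s'(i)=s(i)$''), and you define $\Phi(I)$ to contain \emph{all} $\langle n,s\rangle$ with $n\in I$ and every $s(i)\in I$. But an ideal $I\in A$ may have several elements in $I\cap V_i$; if $v\neq w$ are two of them, then $\langle n,\{i\mapsto v\}\rangle$ and $\langle n,\{i\mapsto w\}\rangle$ both lie in $\Phi(I)$ yet admit no common $\sqsubset$-upper bound, since any $s'$ above both would need $s'(i)=v=w$. So $\Phi(I)$ is not directed. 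Restricting $\Phi(I)$ to use a canonical witness (e.g.\ the least one) would restore directedness but destroy computability of $\Phi$, since the minimum of $I\cap V_i$ cannot be confirmed from an enumeration of $I$. Allowing revision of witnesses would restore directedness but then the accumulated $s$-data no longer forces $\Psi(J)\in A$. Your proposed resolution --- insisting witnesses agree on overlaps --- is precisely the freezing condition, so it recreates rather than solves the problem. (A secondary issue: nothing in your relation ties $s(i)$ to $n$, so even granting the rest, the recorded witnesses need not lie in $\Psi(J)$.)

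The paper sidesteps all of this by a different encoding: a basic element is $\langle F,k\rangle$ where $F\in\calPfin(\IN)$ is a finite \emph{set} approximating the ideal and $k$ is merely a stage counter. The relation $\langle F_1,k_1\rangle\sqsubset\langle F_2,k_2\rangle$ requires $k_1<k_2$, $F_1\subseteq F_2$, together with progress clauses for nonemptiness, downward closure, and $\prec$-directedness of the eventual union, and finally the $\lpi 2$ clause $(\forall i\leq k_1)\,[F_1\cap U_i^{(k_1)}\neq\emptyset\Rightarrow F_2\cap V_i\neq\emptyset]$. No witness is recorded separately --- it is just some element of $F_2$ --- so the inverse $g(I)=\{\langle F,k\rangle\mid F\subseteq I\text{ finite},\ k\in\IN\}$ is trivially directed: any two finite subsets of $I$ sit inside a larger finite $G\subseteq I$, padded with whatever further elements of $I$ the progress clauses demand. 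Replacing your single $n$ by a finite $F$ and collapsing the witness function $s$ into the bare stage $k$ would repair your construction and bring it in line with the paper's.
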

\begin{proof}
Assume $A = \name{(\forall i)U_i\Rightarrow V_i}$ for some computable enumeration $(U_i, V_i)_{i\in\IN}$ of c.e. subsets of $\IN$. Let $\prec^{(\cdot)}$ be a decidable relation such that
\begin{eqnarray*}
m \prec n &\iff& (\exists k\in\IN)\, m \prec^{(k)} n, \text{ and}\\
k\leq k' \text{ \& } m \prec^{(k)} n &\Longrightarrow& m \prec^{(k')} n.
\end{eqnarray*}
Let $(U_i^{(k)})_{i,k\in\IN}$ be a double enumeration of decidable subsets of $\IN$ such that 
\[U_i = \bigcup_{k\in\IN} U_i^{(k)} \text{ and } k\leq k' \Rightarrow U_i^{(k)} \subseteq U_i^{(k')}.\]

For $F_1,F_2 \in \calPfin(\IN)$ and $k_1,k_2\in\IN$, define $\langle F_1, k_1 \rangle \sqsubset \langle F_2, k_2\rangle$ if and only if the following all hold:
\begin{enumerate}
\item
$k_1 < k_2$
\item
$F_1 \subseteq F_2$
\item
$F_2\not=\emptyset$
\item
$(\forall m \leq k_1)\left[[(\exists n\in F_1)\,m\prec^{(k_1)} n] \Rightarrow m\in F_2\right]$
\item
$(\forall a,b\in F_1)(\exists c \in F_2)[ a \prec c \text{ \& } b \prec c]$
\item
$(\forall i \leq k_1)[F_1 \cap U_i^{(k_1)}\not=\emptyset \Rightarrow F_2\cap V_i \not=\emptyset]$.
\end{enumerate} 

It is clear that $\sqsubset$ is c.e., and the monotonicity assumptions on $\prec^{(\cdot)}$ and $U_i^{(k)}$ imply that if $\langle F_1, k_1\rangle \sqsubset \langle F_2,k_2\rangle$ and $F\subseteq F_1$ and $k \leq k_1$ then $\langle F, k\rangle \sqsubset \langle F_2,k_2\rangle$, hence $\sqsubset$ is transitive. 

Define $f\colon \I{\sqsubset} \to \name{(\forall i)U_i\Rightarrow V_i}$ by $f(I) = \bigcup_{\langle F,k\rangle \in I} F$. Given $I\in \I{\sqsubset}$, the directedness of $I$ implies any $\langle F_0, k_0\rangle \in I$ can be extended to a finite $\sqsubset$-chain in $I$ of arbitrary length, hence for any $k\in\IN$ there are $\langle F_1, k_1\rangle,\langle F_2, k_2\rangle \in I$ with $\langle F_0, k_0\rangle \sqsubset\langle F_1, k_1\rangle \sqsubset \langle F_2, k_2\rangle$ and $k < k_1$. Thus $\langle F_0, k \rangle  \sqsubset \langle F_2,k_2\rangle$, which implies $\langle F_0, k \rangle \in I$. It follows that $n\in f(I)$ if and only if $\langle \{n\}, k\rangle \in I$ for some (equivalently, every) $k\in\IN$. By using the directedness of $I$ this observation generalizes from singletons to all finite sets, and so for any $F\in\calPfin(\IN)$ we have $F\subseteq f(I)$ if and only if $\langle F,k\rangle\in I$ for some (equivalently, every) $k\in\IN$. Then conditions 3, 4, and 5 in the definition of $\sqsubset$ imply that $f(I)$ is indeed an ideal of $\prec$, and condition 6 implies $f(I)\in\name{(\forall i)U_i\Rightarrow V_i}$. Thus $f$ is well-defined, and it is clearly a computable injection.

A computable inverse of $f$ is given by $g\colon \name{(\forall i)U_i\Rightarrow V_i} \to \I{\sqsubset}$ defined as $g(I) = \{\langle F,k\rangle \mid k\in\IN \text{ \& $F\subseteq I$ is finite}\}$. The only part of the proof that $g$ is well-defined which requires a little thought is showing that $g(I)$ is directed for each $I\in \name{(\forall i)U_i\Rightarrow V_i}$, but it is not difficult to see that if $\langle F_1,k_1\rangle, \langle F_2, k_2\rangle \in g(I)$, then one can find a finite $G\subseteq I$ which contains $F_1\cup F_2$ and enough of $I$ to satisfy conditions 3 through 6 and obtain $\langle F_1,k_1\rangle, \langle F_2, k_2\rangle \sqsubset \langle G, k_1+k_2+1\rangle \in g(I)$. The claim that $g$ is an inverse to $f$ follows from the observations in the previous paragraph.
\qed
\end{proof}

If $R$ and $S$ are codes for total functions $\name{R},\name{S}\colon \I{\prec_1} \to \I{\prec_2}$, then for any $I \in \I{\prec_1}$ we have $\name{R}(I) = \name{S}(I)$ if and only if 
\[(\forall n\in\IN)\left[ n\in\name{R}(I) \iff n\in\name{S}(I)\right].\]
This is a $\lpi 2$-subset of $\I{\prec_1}$ whenever $\prec_1$, $\prec_2$, $R$, and $S$ are c.e. It follows that we can computably obtain a c.e. relation $\sqsubset$ such that $\I{\sqsubset}$ is an equalizer of $\name{R}$ and $\name{S}$.

Note that Theorem~\ref{thrm:equalizers} is the best result possible because if $\prec$, $\sqsubset$, $R$, and $S$ are c.e. such that $\name{R}\colon \I{\sqsubset} \to \I{\prec}$ is total with partial inverse $\name{S}\colon \I{\prec} \to \I{\sqsubset}$ (meaning $(\forall J\in\I{\sqsubset})[ J = \name{S}(\name{R}(J))]$) then $I\in range(\name{R})$ if and only if $I\in dom(\name{S})$ and $I = \name{R}(\name{S}(I))$. Since $dom(\name{S})$ is $\lpi 2$, it follows that $\I{\sqsubset}$ is computably homeomorphic to the $\lpi 2$-subset $range(\name{R})$ of $\I{\prec}$.


\section{Examples from computable topology}


\subsection{Completion of (computable) separable metric spaces}

Let $(X,d)$ be a separable metric space. Fix a countable dense subset $D\subseteq X$, and define a transitive relation $\prec$ on $P = D\times\IN$ as 
\[\langle x,n\rangle \prec \langle y, m\rangle \iff d(x,y) < 2^{-n}-2^{-m}.\]
This definition guarantees that the open ball with center $x$ and radius $2^{-n}$ contains the closed ball with center $y$ and radius $2^{-m}$. The pair $(P,\prec)$ is a countable substructure of the \emph{formal balls} of $(X,d)$, a well-known construction in domain theory (see Section~V-6 of \cite{g03} and Section~7.3 of \cite{GL13}). It is straightforward to see that $\prec$ is transitive by using the triangular inequality for $d$.

If $I \in \I{\prec}$ then it contains a cofinal infinite ascending $\prec$-chain $(\langle x_i, n_i \rangle)_{i\in\IN}$, which means that $\langle x_i,n_i \rangle\prec \langle x_{i+1}, n_{i+1} \rangle$ for all $i\in\IN$ and that for any $\langle x,n \rangle\in I$ there is $i\in \IN$ with $\langle x,n \rangle \prec \langle x_i,n_i \rangle$. Note that $(n_i)_{i\in\IN}$ is strictly increasing because $0\leq d(x_i, x_{i+1}) < 2^{-n_i} - 2^{-n_{i+1}}$, and therefore $(x_i)_{i\in\IN}$ is a Cauchy sequence. It follows that $\lim_{i\to\infty} d(x,x_i)$ is well-defined for all $x\in D$.

Next we show that $\langle x,n \rangle\in I$ if and only if $\lim_{i\to\infty} d(x,x_i)< 2^{-n}$. For any $\langle x,n \rangle \in I$ the cofinality of $(\langle x_i, n_i \rangle)_{i\in\IN}$ implies there is $i_0\in\IN$ with $\langle x,n \rangle \prec \langle x_i, n_i \rangle$ for all $i\geq i_0$. Let $\varepsilon >0$ be such that $d(x,x_{i_0})  = 2^{-n} - 2^{-n_{i_0}} - \varepsilon$. Then for $i\geq i_0$ we have 
\[d(x, x_i) \leq d(x, x_{i_0}) + d(x_{i_0}, x_i) < (2^{-n} - 2^{-n_{i_0}}-\varepsilon) + (2^{-n_{i_0}} - 2^{-n_i}) < 2^{-n}-\varepsilon,\]
hence $\lim_{i\to\infty} d(x,x_i) \leq 2^{-n} -\varepsilon <  2^{-n}$. Conversely, assume $x\in D$ and there is $\varepsilon >0$ such that $\lim_{i\to\infty} d(x,x_i) < 2^{-n} - \varepsilon$. Fix $i \in \IN$ such that $d(x,x_i)+\varepsilon < 2^{-n}$ and $2^{-n_i} < \varepsilon$. Then $d(x,x_i) < d(x,x_i)+\varepsilon-2^{-n_i} < 2^{-n}-2^{-n_i}$, hence $\langle x,n \rangle \prec \langle x_i,n_i \rangle$, which implies $\langle x,n \rangle\in I$.

It is now easy to see that $\I{\prec}$ is homeomorphic to the completion $(\widehat{X},\widehat{d})$ of $(X,d)$. The usual admissible representation for $\widehat{X}$ is to represent each $x \in\widehat{X}$ by the fast Cauchy sequences $(x_i)_{i\in\IN}$ in $D$ that converge to $x$ (by \emph{fast Cauchy} we mean $d(x_i,x_i+1)<2^{-(i+1)}$ for each $i\in\IN$). From an enumeration of $I\in \I{\prec}$ we can extract a cofinal infinite ascending $\prec$-chain $(\langle x_i, n_i \rangle)_{i\in\IN}$ in $I$ so that $(x_i)_{i\in\IN}$ is a fast Cauchy sequence determining a point in $\widehat{X}$. In the other direction, given a fast Cauchy sequence $(x_i)_{i\in\IN}$ in $D$, we have $d(x_i,x_{i+1}) < 2^{-(i+1)} = 2^{-i} - 2^{-(i+1)}$ for each $i\in\IN$,  hence $(\langle x_i, i \rangle)_{i\in\IN}$ is an infinite ascending $\prec$-chain which generates an ideal $I\in \I{\prec}$. This determines a homeomorphism between $\I{\prec}$ and $\widehat{X}$.

A computable metric space $(X,d)$ comes with an indexing $\alpha\colon \IN \to D$ for some dense $D\subseteq X$ in such a way that $\{ (q,r,i,j) \in \IQ^2\times\IN^2 \mid q < d(\alpha(i),\alpha(j)) < r\}$ is computably enumerable. Defining $\langle i,n \rangle \prec \langle j, m\rangle$ if and only if $d(\alpha(i),\alpha(j)) < 2^{-n}-2^{-m}$ determines a transitive c.e. relation $\prec$ such that $\I{\prec}$ and $\widehat{X}$ are computably homeomorphic.


\subsection{Completion of computable topological spaces}

A (countably based) \emph{computable topological space} (also called an \emph{effective topological space}; see \cite{KK08,Sel08,WG09,KK17,HRSS19}) is a tuple $(X, \varphi, S)$ where: 
\begin{enumerate}
\item
$X$ is a $T_0$-space (we write $\PO(X)$ for its topology),
\item
$\varphi\colon\IN\to\PO(X)$ is an enumeration of a basis for $X$,
\item
$S \subseteq \IN^3$ is a c.e. set satisfying $\varphi(n) \cap \varphi(m) = \bigcup\{ \varphi(k) \mid \langle n,m,k \rangle \in S\}$ for each $n,m\in\IN$.
\end{enumerate}

Note that the only effective aspect of this definition is the c.e. set $S$, and there are no specifications as to how the space $X$ and the enumeration $\varphi$ should be defined. As a result, if $(X,\varphi, S)$ is a computable topological space, then for any subspace $Y \subseteq X$ we can restrict $\varphi$ in the obvious way to obtain a map $\varphi'\colon \IN\to\PO(Y)$ such that $(Y,\varphi', S)$ is also a computable topological space. A common extension of the above definition additionally requires that $\{ n\in\IN\mid \varphi(n)\not=\emptyset\}$ is a c.e. set, but even in this case one can define highly non-constructive dense subspaces of a computable topological space which are still computable topological spaces. 

Since the effective part of the above definition is compatible with infinitely many computable topological spaces, a natural question to ask is whether there is any \emph{canonical} computable topological space associated to a given c.e. set $S$. This question leads to Definition~\ref{def:canonicalcomptop} below. In the following, for any continuous function $f\colon X \to Y$, the function $\PO(f) \colon \PO(Y) \to \PO(X)$ is defined as $\PO(f)(U) = f^{-1}(U)$.

\begin{definition}\label{def:canonicalcomptop}
Let $S\subseteq \IN^3$ be a c.e. set. A computable topological space $(X,\varphi, S)$ is \emph{complete} if and only if for any computable topological space $(Y,\psi,S)$ there is a unique computable embedding $e\colon Y \to X$ satisfying $\psi = \PO(e)\circ \varphi$.
\end{definition}

Intuitively, $(X,\varphi, S)$ is a complete computable topological space if and only if all other computable topological spaces associated to $S$ are essentially just restrictions of the kind $(Y,\varphi', S)$ we saw earlier. Also note that any complete computable topological space associated to $S$ is unique up to computable homeomorphism. The next lemma shows that every c.e. subset $S\subseteq \IN^3$ determines a complete computable topological space.

\begin{lemma}\label{lemma:canonicalcomptop}
For any c.e. subset $S\subseteq \IN^3$, there is a $\lpi 2$-subspace $X \subseteq \calP(\IN)$ such that $\varphi\colon \IN \to \PO(X)$ defined as $\varphi(n) = \{x\in X\mid n\in x\}$ is an enumeration of a basis for $X$ and $(X, \varphi, S)$ is a complete computable topological space.
\end{lemma}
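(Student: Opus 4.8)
The plan is to construct $X$ explicitly as a subspace of $\calP(\IN)$ cut out by the conditions that $S$ imposes on "coherent" subsets, show it is $\lpi 2$, and then verify the universal property of Definition~\ref{def:canonicalcomptop}.

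\medskip

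\textbf{Construction of $X$.} First I would decide what it means for a point $x \in \calP(\IN)$ to be "consistent with $S$". Given a computable topological space $(Y,\psi,S)$, every point $y\in Y$ determines the set $N_y = \{n\in\IN \mid y\in\psi(n)\}$ of basic neighborhood codes of $y$, and the axioms of a computable topological space force $N_y$ to satisfy: (a) $N_y\not=\emptyset$ (since $\psi$ enumerates a basis, hence a cover, assuming the convention that the basis covers $Y$ — if not, one allows the empty point or argues $Y$ may be empty); and (b) for all $n,m\in\IN$, if $n\in N_y$ and $m\in N_y$ then $y\in\psi(n)\cap\psi(m)=\bigcup\{\psi(k)\mid\langle n,m,k\rangle\in S\}$, so there is $k$ with $\langle n,m,k\rangle\in S$ and $k\in N_y$. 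So I would define
\[
X = \{ x\in\calP(\IN) \mid x\not=\emptyset \text{ and } (\forall n,m\in x)(\exists k)(\langle n,m,k\rangle\in S \ \&\ k\in x)\},
\]
with the subspace topology from $\calP(\IN)$, and $\varphi(n)=\{x\in X\mid n\in x\}$. Note $\{[n]\mid n\in\IN\}$ is a basis for $\calP(\IN)$, so $\varphi(n) = [n]\cap X$ gives a basis for $X$, and one checks $\varphi(n)\cap\varphi(m)=\bigcup\{\varphi(k)\mid\langle n,m,k\rangle\in S\}$ holds on $X$ precisely by the defining condition — so $(X,\varphi,S)$ is a computable topological space.

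\medskip

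\textbf{$X$ is $\lpi 2$.} The condition "$x\not=\emptyset$" is $\lsigma 1$ (it is $\bigcup_n[n]$). The condition "$(\forall n,m\in x)(\exists k)(\langle n,m,k\rangle\in S\ \&\ k\in x)$" is a countable conjunction, indexed by pairs $(n,m)$, of the implications $x\in[n]\cap[m] \Rightarrow x\in\bigcup\{[k]\mid \langle n,m,k\rangle\in S\text{ for some stage}\}$; each side is $\lsigma 1$ ($[n]\cap[m]=[\{n,m\}]$ up to the encoding, and the right-hand side is c.e.-indexed union of basic opens since $S$ is c.e.), so the whole thing is exactly of the form $\name{(\forall i)U_i\Rightarrow V_i}$ from the $\lpi 2$-subsets paragraph. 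Intersecting with the $\lsigma 1$ condition (which is trivially also $\lpi 2$, or can be absorbed), $X$ is a $\lpi 2$-subspace of $\calP(\IN)$.

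\medskip

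\textbf{Universal property.} Given any $(Y,\psi,S)$, define $e\colon Y\to X$ by $e(y)=N_y=\{n\in\IN\mid y\in\psi(n)\}$. The computation above shows $e(y)\in X$. Then $e^{-1}(\varphi(n)) = \{y\mid n\in N_y\} = \psi(n)$, which is exactly the condition $\psi=\PO(e)\circ\varphi$; this also shows $e$ is continuous (preimages of basic opens are basic opens) and computable (an enumeration of $N_y$ can be read off from the standard representation of $Y$), and it is a topological embedding because $\psi$ is a basis of $Y$ so the $\psi(n)=e^{-1}(\varphi(n))$ separate points and generate the topology; computability of the inverse (on the image) is similarly routine. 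Uniqueness: if $e'\colon Y\to X$ also satisfies $\psi=\PO(e')\circ\varphi$, then for each $y$ and each $n$ we have $n\in e'(y)\iff e'(y)\in\varphi(n)\iff y\in e'^{-1}(\varphi(n))=\psi(n)\iff n\in N_y=e(y)$, so $e'=e$.

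\medskip

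\textbf{Anticipated obstacle.} The real subtlety is matching the paper's conventions: whether a basis is required to cover $X$ (affecting whether $\emptyset\in X$ or whether $X$ can be empty), and, for the embedding, whether "computable embedding" demands computability of the partial inverse $X\to Y$ on $e(Y)$ — this forces $e$ to be not merely a continuous computable injection but to have a computable section, which follows here because from an enumeration of $N_y=e(y)$ one recovers an enumeration of $\{n\mid y\in\psi(n)\}$, i.e., the standard representation of $y$ in $Y$. I would state these conventions carefully at the start; modulo that, every verification is a direct unwinding of definitions, with the $\lpi 2$ computation being the only place where the explicit bookkeeping with $S$'s enumeration stages matters.
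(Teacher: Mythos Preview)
Your overall strategy matches the paper's, but your definition of $X$ is missing one clause and this breaks the proof. The paper defines $X$ by three conditions: your non-emptiness and directedness conditions, \emph{plus} the downward-closure condition
\[
(\forall \langle n,m,k\rangle\in S)\,[\,k\in x \Rightarrow \{n,m\}\subseteq x\,].
\]
Without it, $(X,\varphi,S)$ need not be a computable topological space: your defining condition yields only the inclusion $\varphi(n)\cap\varphi(m)\subseteq\bigcup\{\varphi(k)\mid\langle n,m,k\rangle\in S\}$, whereas the reverse inclusion $\varphi(k)\subseteq\varphi(n)\cap\varphi(m)$ for $\langle n,m,k\rangle\in S$ requires exactly the missing clause. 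Concretely, take $S=\{\langle 0,1,2\rangle,\langle 0,0,0\rangle,\langle 1,1,1\rangle,\langle 2,2,2\rangle\}$. Then $x=\{2\}$ lies in your $X$ (the only pair to check is $n=m=2$, handled by $\langle 2,2,2\rangle$), so $x\in\varphi(2)$; but $x\notin\varphi(0)\cap\varphi(1)$, hence $\varphi(0)\cap\varphi(1)\neq\varphi(2)=\bigcup\{\varphi(k)\mid\langle 0,1,k\rangle\in S\}$. The same clause is what makes $\{\varphi(n)\}$ a genuine basis rather than just a subbasis (note that $\{[n]\mid n\in\IN\}$ is only a subbasis for $\calP(\IN)$, not a basis, so your appeal to restriction does not suffice either).

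Once you add this clause, everything else in your proposal is essentially the paper's proof: the extra condition is again of the form ``c.e.\ open $\Rightarrow$ c.e.\ open'', so $X$ remains $\lpi 2$; the neighborhood filter $N_y$ automatically satisfies it (because $\psi(k)\subseteq\psi(n)\cap\psi(m)$ in any space compatible with $S$), so your map $e$ still lands in $X$; and your verification of the universal property and uniqueness goes through unchanged.
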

\begin{proof}
Let $S\subseteq \IN^3$ be a c.e. subset. Define $X\subseteq \calP(\IN)$ so that $x\in X$ if and only if the following conditions are all satisfied:
\begin{enumerate}[label=(\roman*)]
\item
$x\not=\emptyset$,
\item
$(\forall \langle n,m,k \rangle \in S)\,[ k\in x \Rightarrow \{n,m\}\subseteq x]$, and
\item
$(\forall n,m\in \IN)\,[ \{n,m\}\subseteq x \Rightarrow (\exists k\in x)\, \langle n,m,k \rangle \in S]$.
\end{enumerate}

It is clear that $X$ is a $\lpi 2$-subspace of $\calP(\IN)$. We first show that $\varphi$ is an enumeration of a basis for $X$. It is clear that each $\varphi(n)$ is an open subset of $X$, and that $\{ \varphi(n)  \mid n\in\IN \}$ covers $X$ because each $x\in X$ is non-empty. Next, note that if $\langle n,m,k \rangle\in S$, then condition (ii) implies $\varphi(k) \subseteq \varphi(n)\cap \varphi(m)$. So for any $x\in \varphi(n)\cap\varphi(m)$, by using condition (iii) it follows that there is $k\in\IN$ with $x\in \varphi(k) \subseteq \varphi(n)\cap\varphi(m)$. Therefore, $\varphi$ is an enumeration of a basis for $X$. It is then easy to see (using condition (iii) again), that $(X,\varphi,S)$ is a computable topological space. 

Given another computable topological space $(Y,\psi,S)$, define $e\colon Y \to X$ as $e(y) = \{ n\in\IN \mid y \in \psi(n)\}$. We first show that $e(y) \in X$ for each $y\in Y$. Clearly, $e(y)$ is non-empty because the basis enumerated by $\psi$ must cover $Y$. Next, if $\langle n,m,k \rangle\in S$ then $\psi(k)$ must be a subset of $\psi(n)\cap \psi(m)$ by condition (3) of the definition of a computable topological space, hence $e(y)$ satisfies condition (ii). Finally, condition (iii) is satisfied because if $\{n,m\}\subseteq e(y)$ then $y \in \psi(n)\cap\psi(m)$ hence there must be $\langle n,m,k \rangle\in S$ with $y\in\psi(k)$ which implies $k\in e(y)$. Therefore, $e$ is well-defined.

Using the fact that $\psi$ enumerates a basis for $Y$, it is easy to see that $e$ is a computable topological embedding. Furthermore, $y \in \psi(n)$ if and only if $n \in e(y)$ if and only if $e(y) \in \varphi(n)$, hence $\psi(n) = e^{-1}(\varphi(n))$. This proves that $\psi = \PO(e)\circ \varphi$, and it is clear that $e$ is the only possible embedding of $Y$ into $X$ that satisfies this property.
\qed
\end{proof}

We take a brief moment to consider the extension where a computable topological space comes with an additional c.e. set $E =\{n\in\IN \mid \varphi(n)\not=\emptyset\}$. Completeness is defined as in Definition~\ref{def:canonicalcomptop}, but with quantification over spaces of the form $(Y,\psi,S,E)$. There is no guarantee that arbitrarily chosen $S$ and $E$ will be compatible, but if they are compatible with at least one computable topological space, then a complete space can be obtained by adding a fourth ($\lpi 2$) axiom ``$(\forall n\in\IN)\,[n\in x \Rightarrow n\in E]$'' to the construction in the proof of Lemma~\ref{lemma:canonicalcomptop}. These modifications could also be made to the following theorem, which shows that complete computable topological spaces provide an effective interpretation of quasi-Polish spaces that is equivalent to the approach using spaces of ideals.

\begin{theorem}\label{thrm:canonicalcomptop}
Every complete computable topological space is computably homeomorphic to $\I{\prec}$ for some transitive c.e. relation $\prec$ on $\IN$. Conversely, given a transitive c.e. relation $\prec$ on $\IN$ one can computably obtain a c.e. subset $S\subseteq \IN^3$ such that $(\I{\prec}, \varphi_{\prec}, S)$ is a complete computable topological space, where $\varphi_{\prec}\colon\IN\to\PO(\I{\prec})$ is the standard enumeration of a basis for $\I{\prec}$ given by $\varphi_{\prec}(n) = [n]_{\prec}$.
\end{theorem}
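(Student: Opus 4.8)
The plan is to prove the two directions separately, using the machinery already developed in the excerpt. For the forward direction, let $(X,\varphi,S)$ be a complete computable topological space. By Lemma~\ref{lemma:canonicalcomptop} there is a $\lpi 2$-subspace $X' \subseteq \calP(\IN)$ with basis enumeration $\varphi'(n) = \{x \in X' \mid n \in x\}$ making $(X',\varphi',S)$ a complete computable topological space. Since complete computable topological spaces associated to the same $S$ are unique up to computable homeomorphism (both $X$ and $X'$ embed into each other via the universal property, and the composites are the identity by the uniqueness clause), $X$ is computably homeomorphic to $X'$. Now recall from the earlier examples that $\calP(\IN) = \I{\subseteq}$ where $\subseteq$ is the subset relation on $\calPfin(\IN)$, which is a transitive c.e. relation. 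Since $X'$ is a $\lpi 2$-subset of $\I{\subseteq}$, Theorem~\ref{thrm:equalizers} applies: one can computably obtain a transitive c.e. relation $\prec$ on $\IN$ with $\I{\prec}$ computably homeomorphic to $X'$, hence to $X$.

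For the converse, fix a transitive c.e. relation $\prec$ on $\IN$ and let $\varphi_\prec(n) = [n]_\prec$. I would first verify that $\varphi_\prec$ enumerates a basis for $\I{\prec}$ — this is exactly the remark following Definition~\ref{def:idealspace}, which also exhibits, for each pair $n,m$, the set of indices $k$ with $[k]_\prec \subseteq [n]_\prec \cap [m]_\prec$; using c.e.-ness of $\prec$ and of $E_\prec$-style witnesses one checks this is semidecidable uniformly in $n,m$, yielding a c.e. set $S \subseteq \IN^3$ with $\varphi_\prec(n)\cap\varphi_\prec(m) = \bigcup\{\varphi_\prec(k) \mid \langle n,m,k\rangle \in S\}$. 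The concrete choice can be read off the construction in Lemma~\ref{lemma:canonicalcomptop}: put $\langle n,m,k\rangle \in S$ iff $[k]_\prec$ is ``forced'' to lie inside $[n]_\prec \cap [m]_\prec$, e.g. iff there is $c$ with $k \prec c$, $n\prec c$ (or $n = c$, handled by reflexivity-compensation as in the product section) and $m \prec c$. So $(\I{\prec},\varphi_\prec, S)$ is a computable topological space.

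It remains to show this space is \emph{complete}, i.e. that for any computable topological space $(Y,\psi,S)$ there is a unique computable embedding $e\colon Y\to\I{\prec}$ with $\psi = \PO(e)\circ\varphi_\prec$. The only candidate is $e(y) = \{n \in \IN \mid y\in\psi(n)\}$, since $\psi = \PO(e)\circ\varphi_\prec$ forces $y \in \psi(n) \iff e(y)\in[n]_\prec \iff n\in e(y)$; so uniqueness is immediate and it suffices to show this $e$ is a well-defined computable embedding. Well-definedness means $e(y)$ is a $\prec$-ideal: non-emptiness holds since $\psi$ covers $Y$; directedness and the lower-set property must be derived from the compatibility condition (3) for $(Y,\psi,S)$ together with the specific form of $S$ chosen above — this is the step that will mirror, and slightly generalize, the argument in Lemma~\ref{lemma:canonicalcomptop}, and I expect it to be the main obstacle, since one must be careful that the $S$ built from $\prec$ is rich enough to force directedness (this is where transitivity and the directedness-compensates-for-non-reflexivity trick from the product subsection reappear). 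Once $e(y)$ is known to be an ideal, computability of $e$ is clear from c.e.-ness of $\psi$-membership being the defining data of a computable topological space, and that $e$ is a topological embedding follows because $\psi$ enumerates a basis of $Y$ (so the $e^{-1}(\varphi_\prec(n)) = \psi(n)$ generate the topology of $Y$ and separate its points), exactly as in the proof of Lemma~\ref{lemma:canonicalcomptop}.
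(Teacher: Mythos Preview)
Your forward direction is correct and is exactly the paper's argument, just with the uniqueness-of-complete-spaces step made explicit.

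For the converse, two comments. First, your tentative definition of $S$ is wrong: putting $\langle n,m,k\rangle\in S$ whenever there is $c$ with $k\prec c$, $n\prec c$, $m\prec c$ does \emph{not} force $[k]_\prec\subseteq[n]_\prec\cap[m]_\prec$, since from $k\in I$ and $k\prec c$ you cannot conclude $c\in I$, hence not $n\in I$. So condition~(3) of a computable topological space would fail. The correct (and much simpler) choice is $S=\{\langle n,m,k\rangle\mid n\prec k\text{ and }m\prec k\}$: then $[k]_\prec\subseteq[n]_\prec\cap[m]_\prec$ because ideals are lower sets, and the reverse inclusion holds because ideals are directed. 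With this $S$ your direct verification of the universal property goes through exactly as you outline: from $\langle n,n,k\rangle\in S$ one gets $\psi(k)\subseteq\psi(n)$, giving the lower-set axiom, and from $\psi(n)\cap\psi(m)=\bigcup\{\psi(k)\mid n\prec k,\ m\prec k\}$ one reads off directedness of $e(y)$.

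Second, the paper takes a shorter route for this half: having fixed the same $S$, it simply observes that the canonical complete space $(X,\varphi,S)$ built in Lemma~\ref{lemma:canonicalcomptop} satisfies $X=\I{\prec}$ \emph{as subsets of} $\calP(\IN)$, by checking that conditions (i)--(iii) defining $X$ are exactly the ideal axioms for $\prec$. This immediately transfers completeness (and the correct basis enumeration $\varphi_\prec$) from $X$ to $\I{\prec}$ without re-running the universal-property argument. Your approach re-proves that argument in the general $(Y,\psi,S)$ setting; it works, but the paper's identification $X=\I{\prec}$ is both shorter and makes the role of the ideal axioms more transparent.
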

\begin{proof}
The first claim follows from Lemma~\ref{lemma:canonicalcomptop} and Theorem~\ref{thrm:equalizers}.

For the converse, let $\prec$ be a transitive c.e. relation on $\IN$. Define 
\[S = \{\langle n,m,k \rangle\in\IN^3 \mid n\prec k \text{ and } m\prec k\},\]
and let $(X,\varphi, S)$ be the complete computable topological space for $S$ as constructed in the proof of Lemma~\ref{lemma:canonicalcomptop}. The proof will be completed by showing that $X = \I{\prec}$ as subsets of $\calP(\IN)$.

First we show $\I{\prec} \subseteq X$. Fix $I \in \I{\prec}$. It is clear that $I$ satisfies condition (i) of the definition of $X$. Next, condition (ii) is satisfied because if $\langle n,m,k \rangle \in S$ and $k\in I$, then $n,m \prec k$ by the definition of $S$, hence $\{n,m\}\subseteq I$ because $I$ is a lower set. Finally, condition (iii) is satisfied because if $\{n,m\}\subseteq I$ the directedness of $I$ implies there is $k\in I$ with $n,m\prec k$, hence $\langle n,m,k \rangle\in S$. Therefore, $I\in X$.

To show $X \subseteq \I{\prec}$, fix any $x \in X$. Clearly $x$ is non-empty. Next, assume $k \in x$ and $n \prec k$. Then $\langle n,n,k \rangle \in S$, hence condition (ii) on $X$ implies $n\in x$, so $x$ is a lower set. Finally, if $n,m \in x$ then condition (iii) on $X$ implies there is $\langle n,m,k\rangle \in S$ with $k\in x$. By definition of $S$ we have $n \prec k$ and $m\prec k$, which shows that $x$ is directed. Therefore, $x \in \I{\prec}$.
\qed
\end{proof}

The above theorem shows that we get a computably equivalent definition of computable topological space if we simply define them to be a pair $(\prec, X)$, where $\prec$ is a transitive c.e. relation and $X\subseteq \I{\prec}$. A more rigorous approach would also require a precise definition of the set $X$, for example by defining a (countably based) ``computable topological space'' to be a pair $(\prec, \Phi_X)$ that contains an explicit (finite) formula $\Phi_X$ with a single free variable $I$ that defines the set $X = \{ I \in \I{\prec} \mid \Phi_X(I)\}$ within some fixed formal system. This would lead us more into the realm of effective descriptive set theory, but adopting such a definition would guarantee that \emph{computable} topological spaces are unambiguously defined by a finite amount of information.


\section{Powerspaces}
Given a topological space $X$, we write $\PL(X)$ for the lower powerspace of $X$ (the closed subsets of $X$ with the lower Vietoris topology), and $\PU(X)$ for the upper powerspace of $X$ (the saturated compact subsets of $X$ with the upper Vietoris topology). Our notation follows that of \cite{DK19}, where other basic results on quasi-Polish powerspaces can be found. For countably based spaces, the lower powerspace defined here is equivalent to the space of (closed) overt sets in \cite{P16,DPS}. In this section, we show how to represent powerspaces as spaces of ideals using the construction introduced in \cite{sm83} for $\omega$-algebraic domains (which is equivalent to the case that $\prec$ is a partial order within our framework). We fix a transitive relation $\prec$ on $\IN$ for the rest of this section. 

\subsection{Lower powerspace}

A basis for the lower Vietoris topology on $\PL(\I{\prec})$ is given by sets of the form 
\[\bigcap_{n\in F}\Diamond [n]_{\prec} = \{ A \in \PL(\I{\prec}) \mid (\forall n\in F)(\exists I \in A)\, n \in I\}\]
for $F \in \calPfin(\IN)$. Define the transitive relation $\prec_L$ on $\calPfin(\IN)$ as
\[F \prec_L G \text{ if and only if } (\forall m\in F)\,(\exists n\in G)\, m \prec n.\]
Transitivity of $\prec_L$ easily follows from the transitivity of $\prec$, and it is clear that $\prec_L$ is c.e. whenever $\prec$ is. Next, define $\fL \colon \PL(\I{\prec}) \to \I{\prec_L}$ as
\[\fL(A) = \{ F \in \calPfin(\IN) \mid (\forall m\in F)(\exists I\in A)\, m\in I\}\]
and $\gL \colon \I{\prec_L} \to \PL(\I{\prec})$ as 
\[\gL(J) = \{ I \in \I{\prec} \mid (\forall m\in I)(\exists F \in J)\, m \in F\}.\]
We will need the following lemma when we prove that these two functions are well-defined computable homeomorphisms.

\begin{lemma}\label{lem:lowerpowerspacelemma}
If $J \in \I{\prec_L}$ and $F \in\calPfin(\IN)$,  then $(\forall m\in F)\, \gL(J) \cap [m]_{\prec}\not=\emptyset$ if and only if $F \in J$.
\end{lemma}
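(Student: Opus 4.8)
The plan is to prove the two implications separately, with essentially all of the work in the ``if'' direction, where for each $m\in F$ one must exhibit an honest ideal of $\prec$ that lies in $\gL(J)$ and contains $m$.

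For the forward implication, assume $\gL(J)\cap[m]_{\prec}\neq\emptyset$ for every $m\in F$. Fixing $m\in F$, choose $I_m\in\gL(J)$ with $m\in I_m$; applying the defining condition of $\gL(J)$ to the element $m\in I_m$ gives some $G_m\in J$ with $m\in G_m$. Since $F$ is finite and $J$ is directed, iterating directedness and merging the finitely many pairwise $\prec_L$-upper bounds into a single one (using transitivity of $\prec_L$) yields $H\in J$ with $G_m\prec_L H$ for all $m\in F$. For each $m\in F$, the facts $m\in G_m$ and $G_m\prec_L H$ produce $n\in H$ with $m\prec n$, so $F\prec_L H$, and hence $F\in J$ because $J$ is a lower set. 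Note that this direction uses only the weaker consequence that every $m\in F$ belongs to some member of $J$.

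For the converse, assume $F\in J$ and fix $m\in F$; the goal is an ideal $I\in\gL(J)$ with $m\in I$. Using directedness of $J$ repeatedly, build an ascending chain $F=F_0\prec_L F_1\prec_L F_2\prec_L\cdots$ with every $F_i\in J$, and then thread $m$ through it: set $m_0=m\in F_0$, and given $m_i\in F_i$ use $F_i\prec_L F_{i+1}$ to pick $m_{i+1}\in F_{i+1}$ with $m_i\prec m_{i+1}$. Let $I=\{k\in\IN\mid(\exists i\in\IN)\,k\prec m_i\}$ be the ideal generated by the resulting $\prec$-chain. Checking $I\in\I{\prec}$ is routine: $I$ is non-empty and contains $m$ since $m=m_0\prec m_1$, and it is a lower set and directed by transitivity of $\prec$, where one passes to indices strictly larger than those given in order to compensate for the lack of reflexivity, just as directedness and transitivity compensated for non-reflexivity in the earlier arguments. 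Finally, $I\in\gL(J)$: given $k\in I$, pick $i$ with $k\prec m_i$; since $m_i\in F_i$ we get $\{k\}\prec_L F_i\in J$, so $\{k\}\in J$ as $J$ is a lower set, which witnesses the defining condition of $\gL(J)$ at $k$.

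The main obstacle is the construction of $I$ in the converse: one cannot simply take $\bigcup_{F\in J}F$, since $\gL(J)$ may be a closed set with several points and that union need not be directed, so the chain-threading is essential for isolating a single ideal passing through $m$. Within that argument, the only delicate point is the directedness of $I$, where non-reflexivity of $\prec$ forces the ``strictly larger index'' trick; everything else is bookkeeping.
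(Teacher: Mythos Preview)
Your proof is correct and follows essentially the same approach as the paper's. Both directions match: in the forward implication you obtain an upper bound $H\in J$ with $F\prec_L H$ (the paper routes through $\{m\}\in J$ first, while you go directly via $G_m\ni m$, but this is a cosmetic rearrangement), and in the converse you build the same $\prec_L$-chain $F_0\prec_L F_1\prec_L\cdots$, thread $m$ through it to get a $\prec$-chain, and take $I=\{k\mid(\exists i)\,k\prec m_i\}$, exactly as the paper does.
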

\begin{proof}
First assume $(\forall m\in F)\, \gL(J) \cap [m]_{\prec}\not=\emptyset$. For each $m \in F$, there is $I \in \gL(J)$ with $m \in I$, and as $I$ is directed, there is $n \in I$ with $m \prec n$, but since $I \in \gL(J)$ there must be $G \in J$ with $n \in G$, and therefore $\{m\} \prec_L G \in J$. This shows that $\{ m\} \in J$ for each $m \in F$. Since $F$ is finite and $J$ is directed, there is $H \in J$ such that $(\forall m\in F)\, \{m\} \prec_L H$. It follows that $F \prec_L H$, and therefore $F \in J$.

For the converse, assume $F \in J$, and fix any $m \in F$. Since $J$ is directed there exists an infinite sequence $F = F_0 \prec_L F_1 \prec_L F_2 \prec_L \cdots$ with $F_i \in J$ for each $i\in \IN$. From the definition of $\prec_L$, there exists an infinite sequence $m = m_0 \prec m_1 \prec m_2 \prec \cdots$ with $m_i \in F_i$ for each $i\in \IN$. Then $I = \{ n \in \IN \mid (\exists i\in\IN)\, n \prec m_i \}$ is in $\I{\prec}$ and $m\in I$. For any $n\in I$ there is $i\in \IN$ with $n \prec m_i \in F_i$, thus $\{n\} \prec_L F_i \in J$ which implies $\{n\} \in J$. Therefore, $I \in \gL(J) \cap [m]_{\prec}$.
\qed
\end{proof}

\begin{theorem}
$\PL(\I{\prec})$ and $\I{\prec_L}$ are computably homeomorphic.
\end{theorem}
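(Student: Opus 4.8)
The plan is to verify that $\fL$ and $\gL$ are well-defined, mutually inverse, and computable in both directions, with Lemma~\ref{lem:lowerpowerspacelemma} doing the heavy lifting for the bijection claim. First I would check that $\fL(A)$ is an ideal of $\prec_L$ for every closed overt $A$: non-emptiness follows because every $m\in\IN$ either has $\{m\}\in\fL(A)$ or not, and the empty finite set is trivially in $\fL(A)$ (so $\fL(A)\neq\emptyset$ regardless, though one should double-check the intended convention on $\emptyset\in\calPfin(\IN)$; if $\emptyset$ is excluded one uses directedness of some $I\in A$ to get a singleton); the lower-set property is immediate from the definition of $\prec_L$ and transitivity of $\prec$; and directedness of $\fL(A)$ follows by taking, for $F,G\in\fL(A)$, for each $m\in F\cup G$ a point $I_m\in A$ with $m\in I_m$, then using directedness of each $I_m$ to push $m$ up to some $m'\succ m$ still witnessed in $A$, collecting these into $H$. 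Symmetrically, I would check $\gL(J)$ is a closed set (it is the set of ideals $I$ all of whose elements are "covered" by some $F\in J$, which is closed in the lower Vietoris topology as the complement of a union of $\Diamond$-sets) — actually the cleanest route is to first establish the homeomorphism on points and derive closedness/overtness afterward, or simply note $\gL(J)=\{I\mid (\forall m\in I)(\exists F\in J)\,m\in F\}$ is relatively easy to see is closed.

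Next I would prove $\gL\circ\fL=\mathrm{id}$ and $\fL\circ\gL=\mathrm{id}$. For the latter, $\fL(\gL(J))=\{F\mid (\forall m\in F)\,\gL(J)\cap[m]_\prec\neq\emptyset\}$, which by Lemma~\ref{lem:lowerpowerspacelemma} equals $\{F\mid F\in J\}=J$ — this is exactly what the lemma was designed to give, so this direction is essentially immediate. For $\gL(\fL(A))=A$: the inclusion $A\subseteq\gL(\fL(A))$ is straightforward since any $I\in A$ has every $m\in I$ witnessed by $I$ itself, so $\{m\}\in\fL(A)$, giving $I\in\gL(\fL(A))$. The reverse inclusion $\gL(\fL(A))\subseteq A$ uses that $A$ is closed: if $I\notin A$ then, $A$ being closed, there is a basic open neighborhood of $I$ missing... wait, rather one argues $I\in\gL(\fL(A))$ means every $m\in I$ has a point of $A$ containing $m$, i.e.\ $I$ is in the closure of $A$ in a suitable sense; since $A$ is closed and the lower Vietoris topology's closed sets are exactly those $A$ with $I\in A$ whenever every basic neighborhood $[m]_\prec$ of $I$ meets $A$ — more precisely, $I\in\overline{A}$ iff every open set containing $I$ meets $A$, and the sets $[m]_\prec$ for $m\in I$ form a neighborhood basis, so $I\in\gL(\fL(A))$ gives $I\in\overline{A}=A$.

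Finally, computability: $\fL$ is computed by the code sending $m\in I$ to all finite sets... more carefully, from an enumeration of $I\in A$ for "all" $I$ one enumerates $F\in\fL(A)$ whenever one has seen, for each $m\in F$, some ideal in $A$ witnessing $m$ — this is exactly the format of a c.e.\ code in the sense of the paper applied to the representation of $\PL(\I{\prec})$, whose standard representation enumerates basic lower-Vietoris neighborhoods $\bigcap_{n\in F}\Diamond[n]_\prec$ containing $A$, and such a neighborhood contains $A$ iff $F\in\fL(A)$. Conversely $\gL$ is computed by enumerating, from an enumeration of $J$, the basic open $[m]_\prec$ that meet $\gL(J)$, and by Lemma~\ref{lem:lowerpowerspacelemma} $[m]_\prec$ meets $\gL(J)$ iff $\{m\}\in J$, which is semidecidable from an enumeration of $J$. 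I expect the main obstacle to be bookkeeping around the representation of $\PL(\I{\prec})$ — making precise that enumerating "the finite sets $F$ with $A\in\bigcap_{n\in F}\Diamond[n]_\prec$" is exactly the standard admissible representation of the lower powerspace, and that $F\in\fL(A)\iff A\in\bigcap_{n\in F}\Diamond[n]_\prec$ — together with the (minor) subtlety of whether $\emptyset\in\calPfin(\IN)$ is admitted and what $\prec_L$ does to it; everything else reduces to Lemma~\ref{lem:lowerpowerspacelemma} and routine directedness arguments of the same flavor as the $\pi_1$ computation in the products subsection.
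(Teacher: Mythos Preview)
Your proposal is correct and follows essentially the same approach as the paper: verify that $\fL(A)$ is an ideal (using $\emptyset\in\fL(A)$ for non-emptiness, exactly as you guessed), that $\gL(J)$ is closed, and then use Lemma~\ref{lem:lowerpowerspacelemma} together with the equivalence $F\in\fL(A)\iff A\in\bigcap_{m\in F}\Diamond[m]_\prec$ to get computability and the inverse identities. The only cosmetic difference is in how $\gL(\fL(A))=A$ is obtained: you argue pointwise via $I\in\gL(\fL(A))\Rightarrow I\in\overline{A}=A$, whereas the paper chains the two basic-open equivalences to conclude that $\gL(\fL(A))$ and $A$ lie in the same basic opens of $\PL(\I{\prec})$ and are hence equal by $T_0$-ness---this is marginally slicker since it treats both inverse identities symmetrically, but your closure argument is equally valid.
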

\begin{proof}
We will prove that $\fL$ and $\gL$ are well-defined computable inverses of each other in several steps.
\begin{itemize}[label=\textbullet]
\item
\emph{$\fL$ is well-defined:} We must show that $\fL(A)$ is an ideal.
\begin{enumerate}
\item
\emph{($\fL(A)$ is non-empty).} $\fL(A) \not=\emptyset$ because $\emptyset \in \fL(A)$. 
\item
\emph{($\fL(A)$ is a lower set).} If $G \in \fL(A)$ and $F \prec_L G$, then for any $m\in F$ there is $n \in G$ with $m \prec n$. There is some $I \in A$ with $n\in I$, and also $m \in I$ because $I$ is a lower set. Therefore, $F \in \fL(A)$. 
\item
\emph{($\fL(A)$ is directed).} Assume $F, G \in \fL(A)$. For each $m \in F \cup G$ there is some $I \in A$ with $m \in I$, and by directedness of $I$ we can choose some $n_m \in I$ with $m \prec n_m$. Combine these choices into a single (finite) set $H = \{ n_m \mid m \in F\cup G\}$. Then $H \in \fL(A)$ and $F,G \prec_L H$.
\end{enumerate}
\item
\emph{$\gL$ is well-defined:} We must show that $\gL(J)$ is a closed subset of $\I{\prec}$. If $I \not \in \gL(J)$, then by definition of $\gL(J)$ there must be $m \in I$ such that $(\forall F\in J)\, m\not\in F$. Then $[m]_{\prec}$ is an open neighborhood of $I$ that does not intersect $\gL(J)$, hence $\gL(J)$ is closed.
\item
\emph{$\fL$ is computable:} Clearly, $\fL(A) \in [F]_{\prec_L}$ if and only if $A \in {\bigcap}_{m\in F} \Diamond [m]_{\prec}$.
\item
\emph{$\gL$ is computable:} Lemma~\ref{lem:lowerpowerspacelemma} is the statement $\gL(J) \in \bigcap_{m\in F} \Diamond [m]_{\prec}$ if and only if $J \in [F]_{\prec_L}$. 
\item
\emph{$\fL(\gL(J)) = J$:} The above proofs that $\fL$ and $\gL$ are computable imply that $F \in \fL(\gL(J))$ if and only if $\gL(J) \in {\bigcap}_{m\in F} \Diamond [m]_{\prec}$ if and only if $F \in J$.
\item
\emph{$\gL(\fL(A)) = A$:} The above proofs that $\gL$ and $\fL$ are computable imply that $\gL(\fL(A)) \in \bigcap_{m\in F} \Diamond [m]_{\prec}$ if and only if $F \in \fL(A)$ if and only if $A \in \bigcap_{m\in F} \Diamond [m]_{\prec}$.
\end{itemize}
\qed
\end{proof}

\subsection{Upper powerspace}

A basis for the upper Vietoris topology on $\PU(\I{\prec})$ is given by sets of the form \[\Box \bigcup_{n\in F} [n]_{\prec} = \{ K \in \PU(\I{\prec}) \mid (\forall I \in K)(\exists n \in F)\, n \in I\}\]
for $F \in \calPfin(\IN)$. Define the transitive relation $\prec_U$ on $\calPfin(\IN)$ as
\[F \prec_U G \text{ if and only if }(\forall n\in G)\,(\exists m\in F)\, m \prec n.\]
Transitivity of $\prec_U$ easily follows from the transitivity of $\prec$, and it is clear that $\prec_U$ is c.e. whenever $\prec$ is. Next, define $\fU \colon \PU(\I{\prec}) \to \I{\prec_U}$ as
\[\fU(K) = \{ F \in \calPfin(\IN) \mid (\forall I \in K)(\exists m\in F)\, m\in I\}\]
and $\gU \colon \I{\prec_U} \to \PU(\I{\prec})$ as 
\[\gU(J) = \{ I \in \I{\prec} \mid (\forall F\in J)(\exists m\in I)\, m \in F \}.\]
We will need the following lemma when we prove that these two functions are well-defined computable homeomorphisms.

\begin{lemma}\label{lem:upperpowerspacelemma}
If $J \in \I{\prec_U}$ and $S\subseteq \IN$,  then $\gU(J) \subseteq \bigcup_{m\in S} [m]_{\prec}$ if and only if there is finite $F\subseteq S$ with $F \in J$.
\end{lemma}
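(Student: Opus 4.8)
The two implications are of very different character, so I would treat them separately.

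\medskip
\noindent\emph{Reverse implication.} This is immediate from the definitions: if $F\subseteq S$ is finite with $F\in J$, then every $I\in\gU(J)$ meets $F$ by the definition of $\gU$, and any witness $m\in I\cap F$ gives $I\in[m]_{\prec}$ with $m\in S$. Hence $\gU(J)\subseteq\bigcup_{m\in S}[m]_{\prec}$.

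\medskip
\noindent\emph{Forward implication.} I would argue by contraposition: assume that \emph{no} finite subset of $S$ lies in $J$ (note this already forces $\emptyset\notin J$), and produce an ideal $I\in\gU(J)$ with $I\cap S=\emptyset$, i.e.\ with $I\notin\bigcup_{m\in S}[m]_{\prec}$. Call $a\in\IN$ \emph{$S$-free} if neither $a$ nor any $\prec$-predecessor of $a$ lies in $S$; transitivity of $\prec$ shows that every $\prec$-predecessor of an $S$-free element is again $S$-free, so the set $W$ of $S$-free elements is a $\prec$-lower set. Since $I$ is to be a lower set, demanding $I\cap S=\emptyset$ is the same as demanding $I\subseteq W$. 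Fix a cofinal chain $F_0\prec_U F_1\prec_U\cdots$ in the (countable, directed) set $J$; all $F_i$ are nonempty since $\emptyset\notin J$. Because each $F_i\in J$, it then suffices to build an ideal $I$ of $\prec$ with $I\subseteq W$ and $I\cap F_i\neq\emptyset$ for all $i$: any $F\in J$ satisfies $F\prec_U F_i$ for some $i$, and a lower-set argument (as in Lemma~\ref{lem:lowerpowerspacelemma}) then upgrades $I\cap F_i\neq\emptyset$ to $I\cap F\neq\emptyset$.

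The construction proceeds in two steps. First I would show $F_i\cap W\neq\emptyset$ for every $i$, which is exactly where the hypothesis on $S$ is used: if $F_i\cap W$ were empty, then every $a\in F_i$ has some $s\in S$ with $s=a$ or $s\prec a$, and using $F_i\prec_U F_{i+1}$ together with transitivity one can, for each $b\in F_{i+1}$, promote such a witness to a \emph{strict} predecessor $s\prec b$ with $s\in S$; the finitely many witnesses obtained form a set $F^{*}\subseteq S$ with $F^{*}\prec_U F_{i+1}$, so $F^{*}\in J$ because $J$ is a lower set and $F_{i+1}\in J$ --- contradicting the assumption. Second, a K\"onig's Lemma argument: the sets $F_i\cap W$ are finite and nonempty, and since $W$ is a $\prec$-lower set, every $b\in F_{i+1}\cap W$ has some $a\in F_i\cap W$ with $a\prec b$; the tree whose level-$i$ vertices are the elements of $F_i\cap W$ (each $b$ at level $i+1$ joined to such an $a$) is finitely branching and infinite, and an infinite branch is a $\prec$-chain $b_0\prec b_1\prec\cdots$ with $b_i\in F_i\cap W$. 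Then $I:=\{n\in\IN\mid(\exists k)\,n\prec b_k\}$ is an ideal of $\prec$ --- directedness and transitivity compensate for the lack of reflexivity, just as in the earlier analysis of the projection $\pi_1$ --- it is contained in $W$ since every $b_k$ is $S$-free and $W$ is a lower set, and $b_i\in I\cap F_i$ shows $I$ meets each $F_i$. Thus $I\in\gU(J)\setminus\bigcup_{m\in S}[m]_{\prec}$, completing the contraposition.

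I expect the main obstacle to be the first step. The obvious witnesses $s$ with $s=a$ or $s\prec a$ need not be strictly below the elements of $F_i$ --- this is precisely where non-reflexivity of $\prec$ bites --- so one genuinely has to pass one level up the cofinal chain and use transitivity to replace ``$\preceq$'' by ``$\prec$'' before the lower-set property of $J$ can be applied. Once the finiteness and nonemptiness of the sets $F_i\cap W$ are in hand, the K\"onig's Lemma extraction and the check that $I$ is an ideal are routine.
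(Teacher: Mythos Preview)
Your argument is correct and is essentially the contrapositive of the paper's proof: both fix a cofinal $\prec_U$-chain $(F_i)$, both use the ``promotion'' trick of passing one level up the chain to turn $s=a$ or $s\prec a$ into a strict relation $s\prec b$, and both invoke K\"onig's lemma on the finitely branching tree of $\prec$-chains through the $F_i$ avoiding $S$-predecessors. The only difference is organizational---the paper argues directly (an infinite branch would contradict $\gU(J)\subseteq\bigcup_{m\in S}[m]_\prec$, so the tree is finite, and the length bound then yields the desired $F\subseteq S$), whereas you run the same two steps in the opposite order under the contrapositive hypothesis.
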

\begin{proof}
For the easy direction, assume $F\subseteq S$ is finite and $F \in J$. Then every $I \in \gU(J)$ intersects $F$, which implies $\gU(J) \subseteq \bigcup_{m\in F} [m]_{\prec} \subseteq \bigcup_{m\in S} [m]_{\prec}$.

Conversely, assume $\gU(J) \subseteq \bigcup_{m\in S} [m]_{\prec}$. Since $J$ is an ideal and countable, there is a sequence $(F_i)_{i\in\IN}$ in $J$ satisfying $(\forall i\in\IN)\, F_i \prec_U F_{i+1}$ and $(\forall F\in J)(\exists i\in\IN)\, F \prec_U F_i$. It is straightforward to see that $I \in \gU(J)$ if and only if $(\forall i\in\IN)\,F_i\cap I \not=\emptyset$. Define 
$T$ to be the set of all $\sigma \in\IN^{<\IN}$ satisfying:
\begin{enumerate}
\item
$(\forall i<len(\sigma)-1)\,\sigma(i) \prec \sigma(i+1)$,
\item
$(\forall i<len(\sigma))\,\sigma(i)\in F_i$,
\item
$(\forall i<len(\sigma))(\forall m\in S)\,m\not\prec \sigma(i)$.
\end{enumerate}
Clearly $T$ is closed under subsequences, hence $T$ is a finitely branching tree because of item 2. If $T$ contained an infinite path $p$ then the ideal $I = \{ n \in\IN \mid (\exists i\in\IN)\, n \prec p(i) \}$ would be in $\gU(J)$ even though item $3$ prevents $I$ from being in $\bigcup_{m\in S} [m]_{\prec}$, which would be a contradiction. It follows from K\"{o}nig's lemma that $T$ is finite. Let $k\in\IN$ be an upper bound for $\{ len(\sigma) \mid \sigma\in T\}$. 

Assume for a contradiction that there is $n_k\in F_k$ such that $(\forall m\in S)\, m \not\prec n_k$. If $k>0$, then $F_{k-1} \prec_U F_k$, hence there is $n_{k-1} \in F_{k-1}$ with $n_{k-1} \prec n_k$, and transitivity of $\prec$ implies $(\forall m\in S)\, m \not\prec n_{k-1}$. Continuing in this way, we can construct a finite sequence $\sigma \in \IN^{<\IN}$ as $\sigma(k) = n_k$, $\sigma(k-1)=n_{k-1}$, and so on, in such a way that $\sigma \in T$ but $len(\sigma) = k+1$, which contradicts the choice of $k$. 

Therefore, for each $n \in F_k$ there is $m_n \in S$ with $m_n \prec n$. Then $F = \{ m_n \mid n \in F_k\}$ is a finite subset of $S$ satisfying $F \prec_U F_k$, hence $F \in J$.
\qed
\end{proof}

\begin{theorem}
$\PU(\I{\prec})$ and $\I{\prec_U}$ are computably homeomorphic.
\end{theorem}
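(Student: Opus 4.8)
The plan is to follow the same six-step template used for the lower powerspace: verify that $\fU$ and $\gU$ are well-defined, that both are computable, and that $\fU(\gU(J)) = J$ and $\gU(\fU(K)) = K$. The essential difference from the lower case is that compactness of $K$ must now be invoked where the empty set served as a trivial witness before, and that the combinatorial core is Lemma~\ref{lem:upperpowerspacelemma} in place of Lemma~\ref{lem:lowerpowerspacelemma}.

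For well-definedness of $\fU$, I would check the three ideal axioms for $\fU(K)$. Non-emptiness is exactly where compactness enters: $K \subseteq \I{\prec} = \bigcup_{n}[n]_{\prec}$, so compactness yields a finite $F$ with $K \subseteq \bigcup_{n\in F}[n]_{\prec}$, i.e.\ $F \in \fU(K)$. The lower-set axiom follows directly from transitivity of $\prec$ and the fact that each $I \in K$ is a lower set (if $F \prec_U G$ and $G \in \fU(K)$, then for $I \in K$ choose $n \in G \cap I$ and $m \in F$ with $m \prec n$, so $m \in I$). Directedness is the one genuinely new argument: given $F, G \in \fU(K)$, consider $W = \{ n \mid (\exists m\in F)\, m\prec n \text{ and } (\exists m'\in G)\, m'\prec n\}$; directedness of each $I \in K$ shows $K \subseteq \bigcup_{n\in W}[n]_{\prec}$, and compactness produces a finite $H \subseteq W$ with $K \subseteq \bigcup_{n\in H}[n]_{\prec}$, whence $H \in \fU(K)$ and $F, G \prec_U H$. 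For $\gU$, I would observe that $\gU(J) = \bigcap_{F\in J}\bigcup_{m\in F}[m]_{\prec}$ is automatically saturated, and that its compactness is precisely Lemma~\ref{lem:upperpowerspacelemma}: any cover by basic open sets $\gU(J) \subseteq \bigcup_{m\in S}[m]_{\prec}$ yields a finite $F \subseteq S$ with $F \in J$, hence the finite subcover $\gU(J) \subseteq \bigcup_{m\in F}[m]_{\prec}$.

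Computability of both maps is then immediate from the definitions: $\fU(K) \in [F]_{\prec_U}$ iff $F \in \fU(K)$ iff $K \in \Box\bigcup_{m\in F}[m]_{\prec}$, and $\gU(J) \in \Box\bigcup_{m\in F}[m]_{\prec}$ iff $\gU(J) \subseteq \bigcup_{m\in F}[m]_{\prec}$ iff (by Lemma~\ref{lem:upperpowerspacelemma}, with $S = F$) there is $F' \subseteq F$ with $F' \in J$, and the latter is a finite union $\bigcup_{F'\subseteq F}[F']_{\prec_U}$ of basic open sets, hence c.e.\ open. Combining the two computations, $\gU(\fU(K))$ and $K$ lie in exactly the same basic open sets of $\PU(\I{\prec})$, so $\gU(\fU(K)) = K$ since $\PU(\I{\prec})$ is $T_0$. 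Likewise $F \in \fU(\gU(J))$ iff there is $F' \subseteq F$ with $F' \in J$; the implication back to $F \in J$ uses the monotonicity observation that $F' \subseteq F$ and $F' \prec_U H$ imply $F \prec_U H$, so picking (by directedness of $J$) some $H \in J$ with $F' \prec_U H$ gives $F \prec_U H$ and hence $F \in J$ because $J$ is a lower set.

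I expect the main obstacle to be the well-definedness of $\fU$, specifically obtaining directedness of $\fU(K)$: unlike in the lower powerspace there is no uniform trivial witness, so one must use compactness of $K$ to collapse the pointwise choices (directedness of each $I \in K$) into a single finite set $H$, while also verifying $F, G \prec_U H$ — this is the one place where the interplay between compactness, directedness, and the non-reflexivity of $\prec$ has to be handled with care. The remaining steps are routine adaptations of the lower powerspace argument with Lemma~\ref{lem:upperpowerspacelemma} substituted for Lemma~\ref{lem:lowerpowerspacelemma}.
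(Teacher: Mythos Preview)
Your proposal is correct and follows essentially the same six-step template as the paper's proof; your argument for directedness of $\fU(K)$ via the set $W$ is a cosmetic rephrasing of the paper's pointwise choices $p_I$, and your treatment of $\gU$'s computability and $\fU(\gU(J))=J$ is actually slightly more explicit than the paper's, which silently absorbs the ``$F'\subseteq F$ and $F'\in J$ implies $F\in J$'' step into its direct assertion that the Lemma gives $\gU(J)\in\Box\bigcup_{m\in F}[m]_{\prec}\iff J\in[F]_{\prec_U}$.
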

\begin{proof}
We will prove that $\fU$ and $\gU$ are well-defined computable inverses of each other in several steps.
\begin{itemize}[label=\textbullet]
\item
\emph{$\fU$ is well-defined:} We must show that $\fU(K)$ is an ideal.
\begin{enumerate}
\item
\emph{($\fU(K)$ is non-empty).} Ideals are non-empty, so we can fix some $m_I \in I$ for each $I \in K$. By compactness of $K$ there is a finite subset $F$ of $\{ m_I \mid I \in K\}$ such that $K \subseteq \bigcup_{m_I \in F} [m_I]_{\prec}$. Then $F \in \fU(K)$, hence $\fU(K) \not=\emptyset$.
\item
\emph{($\fU(K)$ is a lower set).} Assume $G \in \fU(K)$ and $F \prec_U G$. For any $I\in K$ there exists $n \in G \cap I$, and since $F \prec_U G$ there is $m \in F$ with $m \prec n$. Then $m \in F \cap I$ because $I$ is a lower set, and it follows that $F \in \fU(K)$.
\item
\emph{($\fU(K)$ is directed).} Assume $F, G \in \fU(A)$. For each $I \in K$ there exist $m_I \in F \cap I$ and $n_I \in G \cap I$. Since $I$ is an ideal, there is $p_I \in I$ with $m_I \prec p_I$ and $n_I \prec p_I$. By compactness of $K$ there is a finite subset $H$ of $\{ p_I \mid I \in K\}$ such that $K \subseteq \bigcup_{p_I \in H} [p_I]_{\prec}$. Then $H \in \fU(K)$ and $F, G \prec_U H$.
\end{enumerate}
\item
\emph{$\gU(J)$ is well-defined:} We must show that $\gU(J)$ is a saturated compact subset of $\I{\prec}$. It is clear that $\gU(J)$ is saturated, because the specialization order on $\I{\prec}$ is subset inclusion, and if $I$ intersects each $F \in J$ then so does any superset $I'$ of $I$. To show compactness, assume $S \subseteq \IN$ is such that $\gU(J) \subseteq \bigcup_{m\in S} [m]_{\prec}$. Using Lemma~\ref{lem:upperpowerspacelemma}, there is finite $F\subseteq S$ with $F \in J$, hence $\gU(J) \subseteq \bigcup_{m\in F} [m]_{\prec}$.
\item
\emph{$\fU$ is computable:} Clearly, $\fU(K) \in [F]_{\prec_U}$ if and only if $K \in \Box \bigcup_{m\in F} [m]_{\prec}$.
\item
\emph{$\gU$ is computable:} Lemma~\ref{lem:upperpowerspacelemma} implies $\gU(J) \in \Box \bigcup_{m\in F} [m]_{\prec}$ if and only if $J \in [F]_{\prec_U}$. 
\item
\emph{$\fU(\gU(J)) = J$:} The above proofs that $\fU$ and $\gU$ are computable imply that $F\in \fU(\gU(J))$ if and only if $\gU(J) \in \Box \bigcup_{m\in F} [m]_{\prec}$ if and only if $F \in J$.
\item
\emph{$\gU(\fU(K)) = K$:} The above proofs that $\gU$ and $\fU$ are computable imply that $\gU(\fU(K)) \in \Box \bigcup_{m\in F} [m]_{\prec}$ if and only if $F \in \fU(K)$ if and only if $K \in \Box \bigcup_{m\in F} [m]_{\prec}$.
\end{itemize}
\qed
\end{proof}


\bibliographystyle{amsplain}
\bibliography{myrefs}

\end{document}